\newtheorem{thm}{Theorem}[section]
\newtheorem{cor}[thm]{Corollary}
\newtheorem{lem}[thm]{Lemma}
\theoremstyle{definition}
\newtheorem{rem}[thm]{Remark}
\DeclareMathOperator{\N}{\mathbb {N}}
\DeclareMathOperator{\Z}{\mathbb {Z}}
\DeclareMathOperator{\R}{\mathbb {R}}
\DeclareMathOperator{\depth}{depth}
\DeclareMathOperator{\dstab}{dstab}
\def\alb {\boldsymbol {\alpha}}
\def\btb {\boldsymbol {\beta}}
\def\gmb {\boldsymbol {\gamma}}
\def\zv {\mathbf 0}
\def\x {\mathbf x}
\def\mi {\mathfrak m}
\def\h {\widetilde{H}}
\def\H {\mathcal{H}}
\def\V {\mathcal{V}}
\def\E {\mathcal{E}}
\begin{document}

\title[Stability of depth functions of cover ideals of balanced hypergraphs] {Stability of depth functions of cover ideals of balanced hypergraphs}

\author{Nguyen Thu Hang}
\address{Thai Nguyen College of Sciences, Thai Nguyen University, Thai Nguyen, Vietnam}
\email{nguyenthuhang0508@gmail.com}

\subjclass{13A15, 13C13.}
\keywords{Depth function, cover ideals,  powers of ideals}
\date{}

\dedicatory{}
\commby{}
\begin{abstract}We prove that the depth functions of cover ideals of balanced hypergragh have the non-increasing property. Furthermore, we also give a bound for the index of depth stability of these ideals.
\end{abstract}
\maketitle
\section*{Introduction}
Let $R=K[x_1,\ldots,x_n]$ be a polynomial ring over a field $K$ and $I\subseteq R$ is a homogenerous ideal. The numerical function $t\longmapsto \depth R/I^t$, $t\geqslant 1$, is called the {\it depth function} of $I.$ This function has been studied by many authors (see e.g. \cite{HHTT, HTT, HH, HQ, HV, NV,TNT}). It is the well-known result by Brodmann \cite{B} that this function is constant when $t$ large enough. Moreover, Brodmann also \cite{B} proved that:
$$\underset{t\rightarrow \infty}{\lim} \depth R/I^t\leqslant \dim R-\ell(I),$$
where $\ell(I)$ is the analytic spread of $I$. Eisenbud and Huneke \cite{EH} showed that equality hold when the associated graded ring of $I$ is Cohen-Macaulay. 

The smallest number integer $s$ such that $\depth R/I^t=\depth R/I^s$ for all $t\geqslant s$ is called the {\it index of depth stability} and denoted by $\dstab(I).$ It is of natural interest to find a bound for $\dstab(I).$ For until now, there are only a few classes of ideals which are able to find a bound for $\dstab(I),$ for instance \cite {CN, NT,HH,HQ,TNT}.

In general, the behavior of depth functions of monomial ideals is complicated, it is proved in \cite{HHTT} that any convergent non-negative numerical function is the depth function of powers of a monomial ideal. Herzog and Hibi \cite{HH}  asked whether depth function is non-increasing for any square-free monomial ideals. However, Kaiser, Stehl${\rm \acute{i}}$k and ${\rm \check{S}}$krekovski \cite{KSS} gave a counterexample to show that there is a graph whose cover ideal has not non-increasing depth function. Until now, this problem can be only characterized on some certain classes of monomial ideals, for example  \cite{CPSTY,HTT,NT,HV,HKTT,NV}. 

This work is motivated by a recent paper of me and Trung \cite{NT}. In that paper we proved that the depth functions of cover ideals of unimodular hypergraphs are non-increasing and gave a reasonable bound for the index of depth stability of these ideals.  In this paper we extend these results for the cover ideals of balanced hypergraghs. The class of balanced hypergraphs contain all unimodular hypergraphs.

Before stating the main results, we recall some basic notations about from graph theory (see \cite{Berge} for more detail). 

Let $\V = \{1,\ldots,n\}$, and let $\E$ be a family of distinct nonempty subsets of $\V$. The pair $\H=(\V,\E)$ is called {\it a hypergraph} with vertex set $\V$ and edge set $\E$. Note that a hypergraph generalizes the classical notion of a graph. It means that a graph is a hypergraph for which every $E \in \E$ has cardinality two.

One may also define a hypergraph by its incidence matrix $A(\H) = (a_{ij})$, with rows representing the edges $E_1,E_2, \ldots,E_m$ and  columns representing the vertices $1,2,\ldots, n$ where $a_{ij} = 0$ if $j\notin E_i$ and  $a_{ij} = 1$ if $j\in E_i$.

A matrix is called balanced matrix if it 
 has no square submatrix of the form
 \[B_k= \begin{pmatrix} 1&1&0& \cdots &0&0 & 0 \\
0&1&1&\cdots &0& 0 &0  \\
0&0&1&\cdots &0&0&0 \\
\vdots &\vdots &\vdots & \cdots & \vdots & \vdots& \vdots\\

0 &0 &0 & \cdots & 0&1 & 1\\
1 &0 &0 & \cdots &0& 0 & 1
\end{pmatrix},\]
 where $k\geq 3$ is odd.
 
A cycle of length $k$ in hypergragh $\H$ is a sequence $(i_1,E_1,i_2,E_2, i_3,\ldots, i_k,E_k,i_1)$ such that: 
\begin{itemize}
\item $E_1,\ldots,E_k$ are distinct edges of $\H$;
\item $i_1,\ldots,i_k$ are distinct vertices of $\H$ such that $\{i_t,i_{t+1}\}\in E_t$ with $t=1,\ldots,k-1$;
\item $\{i_k,i_1\}\in E_k.$
\end{itemize}

 A hypergraph $\H$ is said to be {\it balanced} if every odd cycle has an edge containing at least three vertices of the cycle. In other word, $\H$ is balanced if and only if so is its incidence matrix.
 
A {\it vertex cover} of $\H$ is a subset of $\V$ which meets every edge of $\H$; a vertex cover is {\it minimal} if none of its proper subsets is itself a cover. For a subset  $\tau = \{i_1,\ldots,i_t\}$ of $\V$, set $\x_{\tau} := x_{i_1}\cdots x_{i_t}$. The {\it cover ideal} of $\H$ is then defined by: 
$$J(\H) := (\x_{\tau} \mid \tau \text{ is a minimal vertex cover of } H).$$

It is well-known that there is one-to-one correspondence between squarefree monomial ideals of $R$ and cover ideals of hypergraphs on the vertex set $\V$.

\medskip

Our first main result of this paper is the following theorem.

\medskip

\noindent{\bf Theorem \ref{T1}.} {\it Let $\H$ be a balanced hypergraph. Then the depth function of $J(\H)$ has non-increasing property. 
}

\medskip

We next solve the question of when $\depth R/J(\H)^s$ becomes stationary where $\H$ is a balanced hypergraph, in term of $\dim R$ and the analytic spread of $J(\H)$. Namely:

\medskip

\noindent{\bf Theorem \ref{T2}.} {\it Let $\H = (\V,\E)$ be a balanced hypergraph with the vertex set\break $\V =\{1,\ldots,n\}$. Then
$$\depth R/J(\H)^t = n -\ell(J(\H)) \text{ for all } t\geqslant n.$$
Moreover, $\dstab(J(\H)) \leqslant n$.
}

\medskip

Our approach is based on a generalized Hochster's formula for computing local cohomology modules of arbitrary monomial ideals formulated by Takayama \cite{T}.  By using this formula we are able to investigate the depth of powers of monomial ideals via the integer solutions of certain systems of linear inequalities. This allows us to use the theory of polytopes as the key role in this paper (see e.g. \cite{NT, HKTT, HT} for this approach).

\medskip

The paper is organized as follows. In Section $1$, we set up some basic notations and terminology for simplicial complex, the relationship between simplicial complexes and cover ideals of hypergraphs; and a generalization of Hochster's formula for computing local cohomology modules. In Section $2$, we prove the non-increasing property for depth functions of cover ideals of balanced hypergraphs, we also establish an upper bound for $\dstab(J(\H))$  of any balanced hypergraph $\H$.

\section{Preliminary}
In this section, in order to be convenient we recall some basic notations used in the paper and a number of auxiliary results. Throughout the paper, the important invariant that we investigate are the depth. Though this notion can be defined in several ways, for our purpose we recall their definition by means of local cohomology modules.

Let $R=K[x_1,\ldots,x_n]$ be a polynomial ring over a field $k$ with the maximal homogeneous ideal $\mi=(x_1,\ldots,x_n).$

Let $M$ be a finitely generated graded non-zero $R-$module.
The depth of $M,$ denoted by $\depth M$, is the lenght of any maximal homogeneous $M-$sequence of $M.$ This invariant is one of the most important numerical invariants in commutative algebra. It can be determined via local cohomology of M as follows:
$$\depth (M):= \min\{i\mid H^i_\mi(M)\neq 0\},$$
where $ H^i_\mi(M)$, for $i\geq 0$, denote the $i-$th local cohomology module of $M$ with respect to $\mi.$

Let $I$ be a monomial ideal in $R.$ Since $R/I$ is an $\mathbb N^n-$graded algebra and $H^i_\mi(R/I)$ is an $\mathbb Z^n-$graded module over $R/I$. We denote by $H^i_\mi(R/I)_{\alpha }$ the $\alpha-$component of $H^i_\mi(R/I)$ for some $\alpha =(\alpha _1,\ldots,\alpha _n)\in \mathbb Z^n.$

In order to compute the $i-$th local cohomology module of $R/I$, we use the formula of Takayama \cite{T}.

A {\it simplicial complex} on $[n] = \{1,\ldots, n\}$ is a collection of subsets of $[n]$ such that if $\sigma\in \Delta $ and $\tau\subseteq \sigma$ then $\tau\in \Delta$. The Stanley-Reisner ideal of the simplicial complex $ \Delta $ is defined by 
$$I_{\Delta}:=(x_\tau \mid \tau \notin \Delta) \subseteq R.$$
Note that  if $I$ is a squarefree monomial ideal, then it is a Stanley-Reisner ideal of the simplicial complex $\Delta(I) := \{ \tau \subseteq \V \mid \x_{\tau} \notin I\}$. If $I$ is a monomial ideal (may be not squarefree) we also use $\Delta(I)$ to denote the simplicial complex corresponding to the squarefree monomial ideal $\sqrt{I}$.

Set $\Delta (I)$ is a simplicial complex corresponding to the Stanley-Reisner $\sqrt{I}.$ For any $\alpha =(\alpha _1,\ldots,\alpha _n)\in \mathbb Z^n.$ Set $x^{\alpha }=x_1^{\alpha _1}\cdots x_n^{\alpha _n}$ and $G_{\alpha }=\{i\mid \alpha _i<0\}.$ For every subset $F\subseteq  [n]=\{1,\ldots,n\}$ we let $R_F=R[x_i^{-1}\mid i\in F].$ Define the simplicial complex, denoted $\Delta _{\alpha }(I)$, by
\begin{equation}\label{degree-complex}
\Delta _{\alpha }(I) :=\{F\backslash  G_{\alb}\mid G_{\alb}\subseteq F, x^{\alpha }\notin IR_F\}.
\end{equation}
We call $\Delta _{\alpha }(I)$ to be a {\it degree complex} of $I.$

Takayama's formula is stated as follows:
\begin{equation} \label{TA} \dim_K {H_{\frak m}^i(R/I)_{\alb}}=\dim_K \widetilde{H}_{i-\mid G_{\alpha }\mid-1 }(\Delta _{\alb}(I);K).
\end{equation}

The original formula in \cite[Theorem 1.1]{T}  is bit different. It considers more conditions on $\alb$ for $H_{\frak m}^i(R/I)_{\alb} = 0$. However, the proof in \cite[Theorem 1.1]{T} shows that we might ignore these conditions.

Let $\mathcal{F}(\Delta)$ be the set of facets of simplicial complex $\Delta$. If $\mathcal{F}(\Delta) =\{F_1,\ldots,F_m\}$, we write $\Delta = \left<F_1,\ldots,F_m\right>$.  Then Stanley-Reisner $I_{\Delta}$ has the primary-decomposition (see \cite[Theorem $1.7$]{MS}):
\begin{equation}\label{HT}
I_{\Delta} = \bigcap_{F\in\mathcal F(\Delta)} P_F,
\end{equation}
where  $P_F= (x_i\mid i\notin F)$.

For $s\geqslant 1$, the $s$-th symbolic power of $I_{\Delta}$ is $I_{\Delta}^{(s)} = \bigcap_{F\in\mathcal F(\Delta)} P_F^s.$

Note that $\Delta (I_{\Delta }^{(s)})=\Delta $. There is an useful description of $\Delta _{\alpha }(I^{(n)})$ as follows: 
\begin{lem} \label{MT1}\cite[Lemma 1.3]{MT1} For all $\alpha \in \mathbb N^n$ and $s\geqslant 1$, we have 
$$\Delta _{\alpha }(I^{(s)})= \left<F\in\mathcal F(\Delta)\mid  \sum_{i\notin F} \alpha_i \leqslant s-1 \right>.$$
\end{lem}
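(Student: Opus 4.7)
The plan is to unravel the definition of the degree complex directly from Takayama's construction and reduce the question about $I^{(s)}$ to a termwise analysis of its primary decomposition $I^{(s)} = \bigcap_{G \in \mathcal{F}(\Delta)} P_G^s$. Since $\alpha \in \mathbb{N}^n$, we have $G_\alpha = \emptyset$, so by \eqref{degree-complex} the question becomes
\[
F \in \Delta_\alpha(I^{(s)}) \iff x^\alpha \notin I^{(s)} R_F = \bigcap_{G \in \mathcal{F}(\Delta)} P_G^s R_F,
\]
which in turn is equivalent to the existence of some facet $G \in \mathcal{F}(\Delta)$ with $x^\alpha \notin P_G^s R_F$.

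Next I would analyze $P_G^s R_F$ by cases based on the containment $F \subseteq G$. If $F \not\subseteq G$, then some $x_i$ with $i \in F \setminus G$ is both in $P_G$ and invertible in $R_F$, so $P_G R_F = R_F$ and the condition $x^\alpha \notin P_G^s R_F$ fails automatically. If instead $F \subseteq G$, then none of the generators $\{x_i : i \notin G\}$ is inverted in $R_F$, and a standard computation shows
\[
x^\alpha \in P_G^s R_F \iff \sum_{i \notin G} \alpha_i \geqslant s,
\]
because in $R_F$ the unit factors $x_i^{\alpha_i}$ for $i \in F \subseteq G$ can be cleared, leaving a monomial in the $x_j$ for $j \notin F$ whose $P_G$-order equals $\sum_{i \notin G} \alpha_i$.

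Combining these two cases, $F \in \Delta_\alpha(I^{(s)})$ precisely when there exists $G \in \mathcal{F}(\Delta)$ with $F \subseteq G$ and $\sum_{i \notin G} \alpha_i \leqslant s-1$. Thus every face of $\Delta_\alpha(I^{(s)})$ is contained in some facet $G$ of $\Delta$ satisfying the numerical condition, and conversely each such $G$ itself lies in $\Delta_\alpha(I^{(s)})$ (take $F=G$). Since facets of $\Delta$ are pairwise incomparable, those $G$ satisfying $\sum_{i \notin G} \alpha_i \leqslant s-1$ are exactly the facets of $\Delta_\alpha(I^{(s)})$, yielding the asserted description.

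The only delicate point I expect is the localization argument in step two, specifically checking that $P_G^s R_F$ has the claimed description when $F \subseteq G$; the rest is a clean bookkeeping exercise once the two cases are separated.
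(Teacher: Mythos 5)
The paper does not prove this lemma; it is quoted directly from \cite[Lemma 1.3]{MT1}. Your argument is a correct and complete direct verification of it: since $G_{\alb}=\emptyset$ for $\alb\in\N^n$, the membership test reduces via $I^{(s)}R_F=\bigcap_{G\in\mathcal F(\Delta)}P_G^sR_F$ to the two cases $F\not\subseteq G$ (where $P_GR_F=R_F$) and $F\subseteq G$ (where $x^{\alpha}\in P_G^sR_F$ iff $\sum_{i\notin G}\alpha_i\geqslant s$), and this is essentially the same computation as in Minh--Trung's original proof.
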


Let $\H = (\V, \E)$ be a hypergraph. Then, the cover ideal of $\H$ can be written as
\begin{equation} \label{intersect}
J(\H) = \bigcap_{E\in\E} (x_i\mid i\in E).
\end{equation}
By this formula, combining with Equation $(\ref{HT})$, it is easy to see that $J(\H)$ is a Stanley-Reisner ideal of simplicial complex 
\begin{equation}\label{complex-cover}
\Delta(J(\H)) = \left<\V\setminus E \mid E\in\E\right>.
\end{equation}

In particular, By  \cite[Theorem 1.4]{HHTZ} if $\H$ is a balanced hypergragh then the cover ideal $J(\H)$ is normally torsion-free, i.e., $J(\H)^s=J(\H)^{(s)}$ for all $s\geqslant 1$. Combining with \cite[Lemma $1.3$]{MT1} we obtain:

\begin{lem} \label{uni-complex}Let $\H = (\V,\E)$ be a balanced hypergraph with $\V =\{1,\ldots,n\}$ and $\alb=(\alpha_1,\ldots,\alpha_n)\in\N^n$. Then for every $s\geqslant 1$ we have
$$
\Delta _{\alb}(J(\H)^s)=\left<\V\setminus E\mid E\in \E \text{ and } \sum_{i\in E} \alpha_i \leqslant s-1\right>.
$$
\end{lem}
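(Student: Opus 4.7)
The plan is to assemble three ingredients from Section 1 and combine them. First, equation (\ref{complex-cover}) identifies $J(\H)$ as the Stanley-Reisner ideal of $\Delta=\Delta(J(\H))=\left<\V\setminus E\mid E\in\E\right>$. Second, the balanced hypothesis on $\H$ triggers \cite[Theorem 1.4]{HHTZ} (quoted in the excerpt just before the lemma): $J(\H)$ is normally torsion-free, so $J(\H)^s = J(\H)^{(s)}$ for every $s\geqslant 1$. Third, Lemma \ref{MT1} describes $\Delta_{\alb}$ of a symbolic power in terms of facets.

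The first step is to invoke the normally torsion-free property to replace $J(\H)^s$ by $J(\H)^{(s)}$, reducing the problem to computing $\Delta_{\alb}(J(\H)^{(s)})$. Applying Lemma \ref{MT1} to $I_\Delta = J(\H)$ then yields
$$\Delta_{\alb}(J(\H)^s) = \left<F\in\mathcal F(\Delta)\;\Bigm|\; \sum_{i\notin F}\alpha_i\leqslant s-1\right>.$$
Every facet $F$ of $\Delta$ has the form $F=\V\setminus E$ for some $E\in\E$ (namely an $E$ that is minimal in $\E$ under inclusion), and for such $F$ one has $\sum_{i\notin F}\alpha_i = \sum_{i\in E}\alpha_i$. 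So this already matches the claimed formula, except that the right-hand side of the lemma indexes over \emph{all} edges $E\in\E$ rather than only those yielding a facet.

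The remaining bookkeeping is to check that this widening of the index set does not enlarge the generated simplicial complex. If $E\in\E$ is not minimal under inclusion, choose $E'\in\E$ with $E'\subsetneq E$; then $\V\setminus E\subsetneq \V\setminus E'$, and since $\alb\in\N^n$ we have $\sum_{i\in E'}\alpha_i\leqslant \sum_{i\in E}\alpha_i$. Thus whenever the bound $\sum_{i\in E}\alpha_i\leqslant s-1$ holds, $E'$ also satisfies the bound, and the face $\V\setminus E$ lies inside the facet $\V\setminus E'$ already counted. Hence both angle-bracket complexes coincide, finishing the proof. There is no genuine obstacle here: the substantive input is \cite[Theorem 1.4]{HHTZ}, and the rest is translation between facet-generators of $\Delta(J(\H))$ and edge-generators of $\H$.
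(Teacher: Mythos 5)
Your proposal is correct and follows exactly the route the paper takes: the paper derives this lemma by quoting \cite[Theorem 1.4]{HHTZ} to get $J(\H)^s=J(\H)^{(s)}$ and then applying Lemma \ref{MT1}, with no further argument written out. Your extra bookkeeping about non-minimal edges not enlarging the generated complex is a detail the paper leaves implicit (it tacitly treats every $\V\setminus E$ as a facet), and it is handled correctly.
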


For $F\subseteq \V$, set $S := [x_i\mid i\notin F]$ and $J' := J(\H)R_F \cap S$. Let $\H'$ be a hypergraph on the vertex set $\V' :=\V\setminus F$ with the edge set $\E' =\{E\in \E \mid E \cap F =\emptyset\}$. By $(\ref{intersect})$ we obtain $J(\H)R_F \cap S = J(\H').$

\section{The behavior of depth functions of cover ideals}

In this section we investigate the non-increasing property of the depth functions of cover ideals of balanced hypergraphs. After that we want to give an effective bound for the index of depth stability of cover ideal $J(\H).$

Let $\H=(\V,\E)$ be a balanced hypergraph on the vertex set $\V =\{1,\ldots,n\}$. Suppose that $\E = \{E_1,\ldots,E_m\}$. Then by Equation $(\ref{complex-cover})$ we have $$\Delta(J(\H)) = \left<\V\setminus E_1,\ldots, \V\setminus E_m\right>.$$

Firstly, let $p\geqslant 0, s\geqslant 1$ and $\alb =(\alpha_1,\ldots,\alpha_n)\in \N^n$ such that $H_{\mi}^p(R/J(\H)^s)_{\alb}\ne\zv$. By Equation $(\ref{TA})$ we have
$$\dim_K \h_{p-1}(\Delta_{\alb}(J(\H)^s);K) =\dim_K H_{\mi}^p(R/J(\H)^s)_{\alb},$$
so that $\h_{p-1}(\Delta_{\alb}(J(\H)^s);K)\ne\zv$. In particular, $\Delta_{\alb}(J(\H)^s)\ne\emptyset$. Thus, by Lemma $\ref{uni-complex}$ we may assume that $\Delta_{\alb}(J(\H)^s) =\left<\V\setminus E_1,\ldots, \V\setminus E_r\right>$ for some $1\leqslant r\leqslant m$. 

For each $t\geqslant 1$, let $\Omega _t \subset \R^n$  be the set of solutions of the following system of linear inequalities:
\begin{equation}\label{EQ-basics}
\begin{cases}
\sum_{i\in E_j} x_i  \leqslant t-1 & \text{ for } j = 1,\ldots,r,\\
\sum_{i\in E_j} x_i  \geqslant  t & \text{ for } j = r+1,\ldots,m,\\
x_1\geqslant 0,\ldots,x_n\geqslant 0.
\end{cases}
\end{equation}
It is obvious $\alb \in \Omega _s$ and so $\Omega _s\neq \emptyset.$ Moreover, by Lemma $\ref{uni-complex}$ if $\beta \in \Omega _t\cap \mathbb N^n$ then 
$$\Delta_{\btb}(J(\H)^t) = \left<\V\setminus E_1,\ldots, \V\setminus E_r\right> = \Delta_{\alb}(J(\H)^s).$$
In order to investigate the set $\Omega _t,$ we can consider $C_t$, which is the set of solutions in $\mathbb R^n$ of the following system of linear inequalities:
\begin{equation}\label{EQ-basics_1}
\begin{cases}
\sum_{i\in E_j} x_i  < t & \text{ for } j = 1,\ldots,r,\\
\sum_{i\in E_j} x_i  \geqslant  t & \text{ for } j = r+1,\ldots,m,\\
x_1\geqslant 0,\ldots,x_n\geqslant 0.
\end{cases}
\end{equation}
Because of for any $t$ we have $\Omega _t\subseteq C_t.$ It deduces $C_s\neq \emptyset$. Since $C_t=tC_1,$ where $C_1$ is the set of solutions of system $(\ref{EQ-basics_1})$ with $t=1.$ It implies $C_1\neq \emptyset.$

Let $\overline{C}_t$ be the closure of $C_t$ in $\R^n$ with respect to the usual Euclidean topology. Then, $\overline{C}_t = t\overline{C}_1$, because of $C_1\ne\emptyset$ it implies $\overline{C}_1\ne\emptyset$. One has $\overline{C}_t$ is solutions set in $\R^n$ of the following system:
\begin{equation}\label{EQ-polytope}
\begin{cases}
\sum_{i\in E_j} x_i  \leqslant t & \text{ for } j = 1,\ldots,r,\\
\sum_{i\in E_j} x_i  \geqslant  t & \text{ for } j = r+1,\ldots,m,\\
x_1\geqslant 0,\ldots,x_n\geqslant 0.
\end{cases}
\end{equation}
It is clear that $\overline{C}_t$ is a convex polyhedron in $\R^n$.

\begin{lem} \label{polytope} $\overline{C}_1$ is a polytope with $\dim \overline{C}_1 = n$. Moreover, every vertex of $\overline{C}_1$ is integer point, i.e., all its coordinates are integers.
\end{lem}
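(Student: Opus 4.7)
The plan is to handle the three separate claims in order: boundedness of $\overline{C}_1$ (which, together with being closed and polyhedral, gives the polytope assertion), $\dim \overline{C}_1 = n$, and integrality of every vertex.

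For boundedness I would prove the containment $\overline{C}_1 \subseteq [0,1]^n$. The crucial observation is that every vertex $i \in \V$ must appear in at least one of the edges $E_1, \ldots, E_r$; for otherwise $i$ would lie in every facet $\V \setminus E_j$ of $\Delta_\alb(J(\H)^s)$, making this complex a cone with apex $i$, whose reduced homology vanishes. By Takayama's formula~(\ref{TA}) this would force $\widetilde{H}_{p-1}(\Delta_\alb(J(\H)^s); K) = \zv$ and so $H^p_\mi(R/J(\H)^s)_\alb = \zv$, contradicting the standing assumption $H^p_\mi(R/J(\H)^s)_\alb \neq \zv$. Fixing an edge $E_j$ (with $j \leqslant r$) that contains $i$ and combining $x_i \geqslant 0$ with $\sum_{k \in E_j} x_k \leqslant 1$ gives $x_i \leqslant 1$. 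Hence $\overline{C}_1 \subseteq [0,1]^n$, so it is a polytope.

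For $\dim \overline{C}_1 = n$, I would exhibit a point in the topological interior of $\overline{C}_1$. The scaled point $\alb/s$ lies in $\overline{C}_s/s = \overline{C}_1$ and already satisfies the upper-bound inequalities strictly, since $\sum_{i \in E_j}\alpha_i/s \leqslant (s-1)/s < 1$ for $j \leqslant r$. Perturbing to $\alb/s + \epsilon\mathbf{1}$ with $\epsilon > 0$ small keeps these strict upper bounds while simultaneously promoting the nonnegativity constraints and the lower-bound inequalities (the edges are nonempty) to strict ones. The resulting point lies in the interior of $\overline{C}_1$, so its affine hull is all of $\R^n$.

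The substantive step is integrality of the vertices, and it is here that the balanced hypothesis is essential. The theorem of Fulkerson--Hoffman--Oppenheim asserts that a $0,1$-matrix $A$ is balanced if and only if, for every partition $(R_\leqslant, R_\geqslant, R_=)$ of its rows, the polyhedron $\{x \geqslant 0 \mid A_{R_\leqslant} x \leqslant \mathbf{1},\ A_{R_\geqslant} x \geqslant \mathbf{1},\ A_{R_=} x = \mathbf{1}\}$ has only integer vertices. Applied to the incidence matrix of $\H$ with $R_\leqslant = \{1,\ldots,r\}$, $R_\geqslant = \{r+1,\ldots,m\}$, and $R_= = \emptyset$, this is exactly the integrality statement for $\overline{C}_1$. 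The main obstacle is precisely this invocation: the first two parts are formal, while the third rests on a nontrivial external theorem from polyhedral combinatorics, which should be cited from Berge's book.
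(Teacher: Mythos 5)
Your proof is correct, and it takes a somewhat different route from the paper's. For the first two claims (boundedness, hence polytope, and $\dim \overline{C}_1 = n$) the paper simply cites Lemma 2.1 of the earlier Hang--Trung paper on unimodular hypergraphs, whereas you reprove them directly; your arguments (the cone/acyclicity observation forcing $\overline{C}_1 \subseteq [0,1]^n$, and the perturbed point $\alb/s + \epsilon\mathbf{1}$ as an interior point) are exactly the ones that citation encapsulates, so nothing is gained or lost beyond self-containedness --- note only that the cone argument depends on the standing hypothesis $H^p_{\mi}(R/J(\H)^s)_{\alb} \neq \zv$, which is indeed in force where the lemma is applied. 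The real divergence is in the integrality step: the paper characterizes a vertex $\alb$ of $\overline{C}_1$ as the unique solution of a square subsystem consisting of some tight edge-equations $\sum_{i \in E_j} x_i = 1$ and some coordinate equations $x_j = 0$, observes that the coefficient matrix is balanced (a submatrix of $A(\H)$ augmented by unit rows), and then cites an integrality theorem from Schrijver to conclude $\alb$ is a $\{0,1\}$-vector; you instead apply the Fulkerson--Hoffman--Oppenheim characterization of balanced matrices directly to the polyhedron $\overline{C}_1$ with the row partition $(\{1,\ldots,r\}, \{r+1,\ldots,m\}, \emptyset)$. Both arguments ultimately rest on the same body of results about balanced matrices, but your invocation is arguably the cleaner one, since the Fulkerson--Hoffman--Oppenheim theorem is stated for precisely the polyhedra $\{x \geqslant 0 \mid A_1 x \leqslant \mathbf{1},\ A_2 x \geqslant \mathbf{1},\ A_3 x = \mathbf{1}\}$ that arise here and requires no detour through identifying vertices with solutions of square subsystems.
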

\begin{proof} Firstly, it follows from \cite[Lemma $2.1$]{NT} that $\overline{C_1}$ is a polytope in $\mathbb R^n$ and\break $\dim \overline{C}_1 = n.$

Now, let $\alb =(\alpha_1,\ldots,\alpha_n)$ is a vertex of $\overline{C_1}$. Since $\overline C_1$ is the solutions set in $\R^n$ of the system $(\ref{EQ-polytope})$ with $t=1$, by \cite[Formula $23$ in Page $104$]{S}, one has $\alb$ must be the unique solution of a system of linear equations of the form
\begin{equation}
\begin{cases} \label{CRAMER}
\sum_{i\in E_j} x_i  = 1 & \text{ for } j \in S_1 \subseteq \{1,\ldots,m\},\\
x_j = 0 & \text{ for } j \in S_2 \subseteq \{1,\ldots,n\}
\end{cases}
\end{equation}
where $|S_1| + |S_2| = n$.

Since the matrix of the system: $$\sum_{i\in E_j} x_i  = 1, \ j \in S_1$$ is a submatrix of $A(\H)$. Hence, it is balanced matrix and hence the matrix of the system $(\ref{CRAMER})$ is balanced matrix. By \cite[Theorem 2.17]{S} we have $\alb$ is a $\{0,1\}-$vector. Therefore, $\alb$ is an integer point.
\end{proof}

\begin{rem}\label{R1}Observe that $\Omega _t\subseteq C_t\subseteq \overline C_t$, so $\Omega _t$ is a polytope as well.
\end{rem}
\begin{lem}\label{polytope_1} For any $t\geqslant 1$,  if $\Omega _t \cap \N^n \neq \emptyset$ then $\Omega _{t+1}\cap \N^n \neq \emptyset$.  Moreover $\Omega _{n}\cap \N^n \neq \emptyset$.
\end{lem}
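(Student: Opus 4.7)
The plan rests on the vertex structure of $\overline{C}_1$ established in Lemma~\ref{polytope}: $\overline{C}_1$ is an $n$-dimensional polytope whose vertices are $\{0,1\}$-vectors. For the monotonicity, I take $\alpha \in \Omega_t \cap \N^n$ and any vertex $v$ of $\overline{C}_1$. Then $\beta := \alpha + v \in \N^n$, and one checks directly that $\beta \in \Omega_{t+1} \cap \N^n$: for $j \leq r$, $\sum_{i \in E_j}\beta_i = \sum_{i \in E_j}\alpha_i + \sum_{i \in E_j}v_i \leq (t-1) + 1 = (t+1) - 1$, while for $j > r$, $\sum_{i \in E_j}\beta_i \geq t + 1$.

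For the existence at level $n$, I would exhibit an explicit integer point of $\Omega_n$ built from a well chosen family of vertices. The key geometric input is that $\dim \overline{C}_1 = n$, so for each $j \leq r$ the face $F_j := \overline{C}_1 \cap \{x : \sum_{i \in E_j} x_i = 1\}$ has dimension at most $n-1$ and therefore fails to contain every vertex of $\overline{C}_1$. Since vertices are $\{0,1\}$-valued, there is a vertex $v^{(j)}$ with $\sum_{i \in E_j} v^{(j)}_i = 0$. Fixing any further vertex $v^\ast$ of $\overline{C}_1$, I set
\[
\beta := v^{(1)} + v^{(2)} + \cdots + v^{(r)} + (n-r)\, v^\ast \in \N^n.
\]
For each $j \leq r$, the $v^{(j)}$-summand contributes $0$ and each of the remaining $n-1$ summands contributes at most $1$, so $\sum_{i \in E_j}\beta_i \leq n-1$; for each $j > r$, every one of the $n$ summands contributes at least $1$, so $\sum_{i \in E_j}\beta_i \geq n$. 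Hence $\beta \in \Omega_n \cap \N^n$.

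I expect the main obstacle to be the case $r > n$, where the coefficient $n-r$ in the construction above is no longer meaningful. In that regime I would instead combine the monotonicity statement with the point $\alpha \in \Omega_s \cap \N^n$ guaranteed by the discussion preceding the lemma: iterating the first part immediately yields $\Omega_t \cap \N^n \neq \emptyset$ for every $t \geq s$, which already gives $\Omega_n \cap \N^n \neq \emptyset$ whenever $s \leq n$. The genuinely subtle residual case is $s > n$; here one must extract from the family $\{v^{(j)}\}_{j \leq r}$ a subcollection of at most $n$ vertices whose zero-sets $Z(v) := \{j \leq r : \sum_{i \in E_j} v_i = 0\}$ still cover $\{1,\ldots,r\}$, and I anticipate that the balanced matrix structure together with Schrijver's integrality theorem will furnish exactly such a subcollection.
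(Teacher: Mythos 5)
Your first assertion (the passage from $\Omega_t$ to $\Omega_{t+1}$) is proved correctly and by exactly the paper's argument: add to $\alpha$ a single integral vertex of $\overline{C}_1$. (You use implicitly that $\overline{C}_1\ne\emptyset$ and that it is a bounded polytope, so that a vertex exists; both facts are supplied by Lemma~\ref{polytope} together with $\Omega_t\ne\emptyset$.)

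The second assertion, $\Omega_n\cap\N^n\ne\emptyset$, is where there is a genuine gap. Your construction $\beta=v^{(1)}+\cdots+v^{(r)}+(n-r)v^{\ast}$ requires $r\leqslant n$, and the case $r>n$ is not vacuous: $r$ counts the edges carrying a ``$\leqslant$''-type constraint, and a hypergraph on $n$ vertices can have many more than $n$ edges, so $m$ and hence $r$ may exceed $n$. Your fallback through monotonicity only settles the case $s\leqslant n$, and your proposed repair of the remaining case --- extracting at most $n$ vertices whose zero-sets cover $\{1,\ldots,r\}$ via ``balancedness plus Schrijver'' --- is stated as an expectation, not an argument; so the lemma is unproved precisely where the work is needed. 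The missing idea is that such a covering family comes for free from a single facet. Assuming $r<m$ (if $r=m$ the zero vector lies in $\Omega_n$), take the facet $F$ of $\overline{C}_1$ cut out by $\sum_{i\in E_m}x_i=1$, choose $n$ affinely independent vertices $\alpha^1,\ldots,\alpha^n$ of $\overline{C}_1$ lying on $F$, and set $\beta=\alpha^1+\cdots+\alpha^n$. For each $j\leqslant r$, not all of the $\alpha^k$ can satisfy $\sum_{i\in E_j}x_i=1$: otherwise their barycenter, a relative interior point of $F$, would lie on that hyperplane, forcing $F\subseteq\{\sum_{i\in E_j}x_i=1\}\cap\{\sum_{i\in E_m}x_i=1\}$ and hence $\dim F\leqslant n-2$, contradicting that $F$ is a facet of the $n$-dimensional polytope $\overline{C}_1$. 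Thus at least one summand contributes $0$ to $\sum_{i\in E_j}\beta_i$, giving $\sum_{i\in E_j}\beta_i\leqslant n-1$ for $j\leqslant r$, while $\sum_{i\in E_j}\beta_i\geqslant n$ for $j>r$; this argument is uniform in $r$. Your treatment of the range $r\leqslant n$ is correct as far as it goes (the observation that each hyperplane $\sum_{i\in E_j}x_i=1$ misses some $\{0,1\}$-vertex is sound), but as written the proof does not establish the lemma in general.
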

\begin{proof} Let $\alb = (\alpha_1,\ldots,\alpha_n)\in \Omega _t\cap \N^n$ so that from the system $(\ref{EQ-basics})$ we can see $\alb$ satisfies
$$
\begin{cases}
\sum_{i\in E_j} \alpha_i \leqslant t-1 & \text{ for } j = 1,\ldots,r,\\
\sum_{i\in E_j} \alpha_i \geqslant  t & \text{ for } j = r+1,\ldots,m.
\end{cases}
$$
Since $\Omega _t\neq \emptyset$, one has $\overline{C_t}\ne\emptyset$. Therefore, $\overline{C_1}\neq \emptyset$.  Let $\gmb\in \mathbb N^n$ be a vertex of $\overline{C_1}$. Then $\gmb\in\N^n$ by Lemma \ref{polytope}. Note also that $\gmb$  is a solution of the system $(\ref{EQ-polytope})$ with replacing $t$ by $1$.

Let $\theta =\alb+\gmb\in \N^n$. We have
$$
\begin{cases}
\sum_{i\in E_j} \theta _i \leqslant t & \text{ for } j = 1,\ldots,r,\\
\sum_{i\in E_j} \theta _i  \geqslant  t+1 & \text{ for } j = r+1,\ldots,m.
\end{cases}
$$
Thus, $\theta \in\Omega _{t+1}$, and it is clear that $\Omega _{t+1}\cap \N^n\ne\emptyset$, as required.

It remains to show that $\Omega_n \cap \N^n\ne\emptyset$. From the system $(\ref{EQ-basics})$ we can see that $\Omega _n$ is the set of solutions of the following system:
\begin{equation}\label{EQ-Pm}
\begin{cases}
\sum_{i\in E_j} x_i  \leqslant n-1 & \text{ for } j = 1,\ldots,r,\\
\sum_{i\in E_j} x_i  \geqslant  n & \text{ for } j = r+1,\ldots,m,\\
x_1\geqslant 0,\ldots,x_n\geqslant 0.
\end{cases}
\end{equation}

If $r=m$, then the zero vector of $\R^n$ is in $\Omega _n$, and then $\Omega _n\cap\N^n\ne\emptyset$. 

Assume that $r < m$. From the system $(\ref{EQ-polytope})$ we conclude that $\sum_{i\in E_m} x_i  = 1$ is a supporting hyperplane of $\overline C_1$. Let $F$ be the facet of $\overline C_1$ determined by this hyperplane. Now take $n$ vertices of $\overline C_1$ lying in $F$, say $\alb^1, \ldots,\alb^n$, such that they are affinely independent. Let $\alb := (\alb^1+\cdots+\alb^n)/n \in \overline C_1$. Then $\alb$ is a relative interior point of $F$, so that it does not belong to any another facet of $\overline C_1$. Thus, $\alb$ is a solution of the following system:
\begin{equation}
\begin{cases}
\sum_{i\in E_j} x_i  < 1 & \text{ for } j = 1,\ldots,r,\\
\sum_{i\in E_j} x_i  \geqslant  1 & \text{ for } j = r+1,\ldots,m,\\
x_1\geqslant 0,\ldots,x_n\geqslant 0.
\end{cases}
\end{equation}
Therefore, $n\alb$ is a solution of the following system
\begin{equation}
\begin{cases}
\sum_{i\in E_j} x_i  < n & \text{ for } j = 1,\ldots,r,\\
\sum_{i\in E_j} x_i  \geqslant  n & \text{ for } j = r+1,\ldots,m,\\
x_1\geqslant 0,\ldots,x_n\geqslant 0.
\end{cases}
\end{equation}
Together with the fact that $n\alb\in \N^n$, it yields $n\alb \in \Omega _n$. Thus, $\Omega _n\cap\N^n\ne\emptyset$.
\end{proof}

We are now ready to prove the first main result of this section. Without loss of generality we may assume that $\E\ne\emptyset$ and thus $J(\H)\ne 0.$ 

\begin{thm} \label{T1} Let $\H$ be a balanced hypergraph. Then the depth funtion of $J(\H)$ has non-increasing property.
\end{thm}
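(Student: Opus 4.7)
The plan is to prove the stronger statement that $\depth R/J(\H)^{s+1} \leq \depth R/J(\H)^s$ for every $s \geq 1$, which is exactly the non-increasing property. Set $p := \depth R/J(\H)^s$ and pick $\alb \in \Z^n$ with $H^p_\mi(R/J(\H)^s)_\alb \neq 0$. The goal is to produce some $\alb'$ with $H^p_\mi(R/J(\H)^{s+1})_{\alb'} \neq 0$.

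First I would reduce to the case $\alb \in \N^n$. If $F := G_\alb \neq \emptyset$, I replace $\H$ by the induced subhypergraph $\H'$ on $\V \setminus F$ with edge set $\{E \in \E : E \cap F = \emptyset\}$, and replace $\alb$ by its restriction to $\V \setminus F$. The subhypergraph $\H'$ is balanced because its incidence matrix is a submatrix of $A(\H)$, and by the discussion at the end of Section~1, $J(\H) R_F \cap S = J(\H')$ where $S = K[x_i \mid i \notin F]$. The standard localization reduction, together with Takayama's formula, transports the problem to $\H'$ with a multidegree in $\N^{n-|F|}$ and the homological shift $p \mapsto p - |F|$; the conclusion for $\H'$ then pulls back to the desired conclusion for $\H$.

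With $\alb \in \N^n$ in hand, Takayama's formula yields $\h_{p-1}(\Delta_\alb(J(\H)^s); K) \neq 0$, so in particular $\Delta_\alb(J(\H)^s) \neq \emptyset$. By Lemma~\ref{uni-complex}, after reordering the edges I may write $\Delta_\alb(J(\H)^s) = \langle \V \setminus E_1, \ldots, \V \setminus E_r \rangle$ with $\sum_{i \in E_j}\alpha_i \leq s-1$ for $j \leq r$ and $\sum_{i \in E_j}\alpha_i \geq s$ for $j > r$. This is exactly the condition $\alb \in \Omega_s \cap \N^n$. Invoking Lemma~\ref{polytope_1} produces some $\ttb \in \Omega_{s+1} \cap \N^n$, and the defining inequalities of $\Omega_{s+1}$, via Lemma~\ref{uni-complex}, force
$$\Delta_\ttb(J(\H)^{s+1}) = \langle \V \setminus E_1, \ldots, \V \setminus E_r \rangle = \Delta_\alb(J(\H)^s).$$
Since $G_\ttb = \emptyset$, a second application of Takayama's formula gives $\dim_K H^p_\mi(R/J(\H)^{s+1})_\ttb = \dim_K \h_{p-1}(\Delta_\ttb(J(\H)^{s+1}); K) \neq 0$, whence $\depth R/J(\H)^{s+1} \leq p$, as required.

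The main obstacle is the opening reduction to $\alb \in \N^n$. Although conceptually routine, one must verify carefully that the induced subhypergraph $\H'$ inherits balancedness (submatrices of balanced matrices being balanced) and that the localization at $\prod_{i \in F} x_i$ interacts compatibly with Takayama's formula so that the reduced-homology index $p - |G_\alb| - 1$ is tracked correctly through the reduction. Once this bookkeeping is accepted, the rest of the proof is a direct polytopal transport via Lemma~\ref{polytope_1}, whose setup was the preparatory work of this section.
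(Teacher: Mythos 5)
Your proposal is correct and follows essentially the same route as the paper's proof: reduce to a nonnegative multidegree by passing to the induced (still balanced) subhypergraph on $\V\setminus G_{\alb}$ and tracking the degree complex through Takayama's formula, then use Lemma \ref{polytope_1} to lift a point of $\Omega_s\cap\N^n$ to $\Omega_{s+1}\cap\N^n$ with the same degree complex, and finally append $-1$'s to return to a multidegree for $\H$. The only detail the paper spells out that you elide is the degenerate case $G_{\alb}=\V$ (ruled out since $J(\H)\neq 0$), and the pullback is carried out concretely via the vector $(\btb_1,\ldots,\btb_p,-1,\ldots,-1)$ rather than by "pulling back a conclusion for $\H'$," but the mechanism you describe is the same.
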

\begin{proof}  Fix any $t\geqslant 1$. We need to show that 
$$\depth R/J(\H)^t\geqslant \depth R/J(\H)^{t+1}.$$

Indeed, let $d:=\depth R/J(\H)^t$. We have $H_{\mi}^d(R/J(\H)^t)_{\alb} \ne\zv$ for some $\alb\in\Z^n$. By Equation $(\ref{TA})$ we have
\begin{equation}\label{N11}
\dim_K \h_{d-|G_{\alb}|-1}(\Delta_{\alb}(J(\H)^t);K) =\dim_K H_{\mi}^d(R/J(\H)^t)_{\alb} \ne 0.
\end{equation}
In particular, $\Delta_{\alb}(J(\H)^t)$ is not acyclic.

If $G_{\alb} =\{1,\ldots,n\}$, then $\Delta_{\alb}(J(\H)^t)$ is either $\{\emptyset\}$ or a void complex. Since it is not acyclic, so $\Delta_{\alb}(J(\H)^t) =\{\emptyset\}$. But then by $(\ref{degree-complex})$ we would have $J(\H) = 0$, a contradiction. 

Therefore, $G_{\alb} \ne \{1,\ldots,n\}$.
We may assume that $G_{\alb} =\{p+1,\ldots,n\}$ for some $1\leqslant p\leqslant n$. Set $S := K[x_1,\ldots,x_p]$. Let $\H'$ be the subhypergraph of $\H$ on the vertex set $\V' =\{1,\ldots,p\}$ with the edge set $\E' =\{E\in \E \mid E \subseteq \V'\}$. By \cite[Proposition 4.3] {AB}, since $\H$ is balanced, so is  $\H'.$

Moreover, by $(\ref{intersect})$ we obtain:
\begin{equation}\label{EQ-LOCALIZATION1}
J(\H)R_{G_{\alb}} \cap S = J(\H').
\end{equation}

Let $\alb^* := (\alb_1,\ldots,\alb_p) \in \N^p$. By using Formulas $(\ref{degree-complex})$ and $(\ref{EQ-LOCALIZATION1})$ we get
\begin{equation}\label{POWER1}
\Delta_{\alb^*}(J(\H')^t) = \Delta_{\alb}(J(\H)^t) \text{ for any } t\geqslant 1.
\end{equation}
Together with $(\ref{N11})$, it gives $\h_{d-|G_{\alb}|-1}(\Delta_{\alb^*}(J(\H')^t);K)\ne 0$. In particular, the complex $\Delta_{\alb^*}(J(\H')^t);K)$ is not acyclic.

Suppose that $\E' =\{E_1,\ldots,E_v\}$ where $v\geqslant 1$. Then by Equation $(\ref{complex-cover})$ $$\Delta(J(\H')) = \left<\V'\setminus E_1,\ldots, \V'\setminus E_v\right>.$$
By Lemma $\ref{uni-complex}$ we may assume that 
$$\Delta_{\alb^*}(J(\H')^t) =\left<\V'\setminus E_1,\ldots, \V'\setminus E_q\right>,$$
with $1\leqslant q\leqslant v$. 

For each integer $s\geqslant 1$, let $\Omega _s$ be the set of solutions in $\R^p$ of the following system:
\begin{equation}\label{Qt}
\begin{cases}
\sum_{i\in E_j} x_i  \leqslant s-1 & \text{ for } j = 1,\ldots,q,\\
\sum_{i\in E_j} x_i  \geqslant  s & \text{ for } j = q+1,\ldots,v,\\
x_1\geqslant 0,\ldots,x_p\geqslant 0.
\end{cases}
\end{equation}
Then $\alb^* \in \Omega _t\cap N^p$, so $\Omega _t\cap\N^p\ne \emptyset$. By Lemma $\ref{polytope_1}$ we have $\Omega _{t+1}\cap \N^p\neq\emptyset$. Let $\btb=(\beta _1,\ldots,\beta_p)\in\Omega_{t+1}\cap\N^p$. Then, 
\begin{equation}\label{Q(t+1)}
\begin{cases}
\sum_{i\in E_j} \btb_i  \leqslant t & \text{ for } j = 1,\ldots,q,\\
\sum_{i\in E_j} \btb _i  \geqslant  t+1& \text{ for } j = q+1,\ldots,v,\\
\end{cases}
\end{equation}
It follows that
\begin{equation}\label{EQ0001}
\Delta_{\btb }(J(\H')^{t+1}) =\Delta_{\alb^*}(J(\H')^t) =\left<\V'\setminus E_1,\ldots, \V'\setminus E_q\right>
\end{equation}

Let $\btb' = (\btb _1,\ldots,\btb _p,-1,\ldots,-1)\in \Z^n$. Then $G_{\btb '} = G_{\alb}$, and by $(\ref{POWER1})$ and $(\ref{EQ0001})$ we obtain
$$\Delta_{\btb '}(J(\H)^{t+1}) = \Delta_{\btb }(J(\H')^{t+1}) = \Delta_{\alb^*}(J(\H')^t) =  \Delta_{\alb}(J(\H)^t).$$
Again, by Lemma $\ref{TA}$ we have
\begin{align*}
\dim_K H_{\mi}^d(R/J(\H)^{t+1})_{\btb' } &= \dim_K \h_{i-|G_{\btb' }|-1}(\Delta_{\btb' }(J(\H)^{t+1});K)\\
&=\dim_K \h_{i-|G_{\alb}|-1}(\Delta_{\alb}(J(\H)^t);K)\ne 0.
\end{align*}
Consequently, $H_{\mi}^p(R/J(\H)^{t+1})\ne 0$, and so
$$\depth R/J(\H)^{t+1} \leqslant d.$$
Therefore, the proof of the theorem is complete.
\end{proof}

In the case $\H$ is a bipartite graph. It is easy to see that by Theorem $\ref{T1}$ we receive \cite[Theorem $3.2$]{CPSTY}.

\begin{cor} If $G$ is  bipartite graph. Then $J(G)$ has non-increasing depth function.
\end{cor}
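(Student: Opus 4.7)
The plan is to deduce the corollary immediately from Theorem~\ref{T1} by verifying that every bipartite graph is a balanced hypergraph. Viewing $G$ as a hypergraph in which every edge has cardinality two, a hypergraph cycle $(i_1,E_1,i_2,\ldots,i_k,E_k,i_1)$ in the sense defined in the introduction reduces to an ordinary graph cycle, since each $E_t$ must equal $\{i_t,i_{t+1}\}$ exactly. Thus the set of odd hypergraph cycles of $G$ coincides with the set of odd graph cycles of $G$.

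Since $G$ is bipartite, it contains no odd cycles of either kind. Therefore the defining condition for balancedness---that every odd cycle must possess an edge meeting the cycle in at least three vertices---is vacuously satisfied, and $G$ qualifies as a balanced hypergraph. (Equivalently, one could observe directly that the incidence matrix $A(G)$ contains no submatrix $B_k$ with $k\geqslant 3$ odd, since every such $B_k$ encodes an odd cycle of length $k$.)

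Once this identification is made, there is nothing left to prove: Theorem~\ref{T1} applies to $G$ directly and yields that the function $t \mapsto \depth R/J(G)^t$ is non-increasing. The only conceptual point is recognizing that, because graph edges have cardinality two and hence cannot contain three distinct vertices of any cycle, the balanced condition for a graph is equivalent to the absence of odd cycles, i.e.\ to bipartiteness. In this sense the corollary is the sharp specialization of Theorem~\ref{T1} to the $2$-uniform case, and the main obstacle---establishing the non-increasing property for the whole class of balanced hypergraphs---has already been overcome in the proof of Theorem~\ref{T1}.
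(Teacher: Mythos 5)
Your proof is correct and matches the paper's (implicit) argument: the paper derives the corollary from Theorem~\ref{T1} by the same observation that a $2$-uniform hypergraph is balanced exactly when it has no odd cycles, i.e.\ when it is bipartite. Your write-up simply makes explicit the verification the paper dismisses as ``easy to see.''
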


Next, we give an upper bound for $\dstab(J(\H))$ where $\H$ is a balanced hypergraph. The our second main result is the following.

\begin{thm} \label{T2} Let $\H = (\V,\E)$ be a balanced hypergraph with the vertex set\break $\V =\{1,\ldots,n\}$. Then
$$\depth R/J(\H)^t = n -\ell(J(\H)) \text{ for all } t\geqslant n.$$
Moreover, $\dstab(J(\H)) \leqslant n$.
\end{thm}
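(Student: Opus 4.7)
The plan is to pin down the stable value of the depth function as $n - \ell(J(\H))$ and to produce an explicit witness already at the power $t = n$; combined with Theorem \ref{T1}, this will force $\depth R/J(\H)^t = n - \ell(J(\H))$ for every $t \geq n$ and yield $\dstab(J(\H)) \leq n$.

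For the stable value, I would start from the fact that balanced hypergraphs have normally torsion-free cover ideal, already invoked before Lemma \ref{uni-complex}. This makes the Rees algebra of $J(\H)$ normal, hence Cohen-Macaulay as a normal semigroup ring, and thus its associated graded ring is Cohen-Macaulay as well. Combining Brodmann's inequality with the Eisenbud--Huneke equality then gives $\lim_{t\to\infty}\depth R/J(\H)^t = n - \ell(J(\H))$. By Theorem \ref{T1} the depth function decreases monotonically to this limit, so $\depth R/J(\H)^t \geq n - \ell(J(\H))$ for every $t$; it therefore suffices to exhibit the reverse inequality at $t = n$.

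For the witness, the plan is to adapt the construction at the end of the proof of Lemma \ref{polytope_1}. I would choose the partition of $\E$ defining $\overline{C}_1$ so that the supporting hyperplanes $\sum_{i\in E_j} x_i = 1$ for $j > r$ cut out a face of $\overline{C}_1$ of dimension $n - \ell(J(\H))$, pick $n$ affinely independent integer vertices of that face (integer by Lemma \ref{polytope}), average them and rescale by $n$ to land in $\Omega_n \cap \N^n$, and then flip a suitable block of coordinates to $-1$ to manufacture $\btb \in \Z^n$ with a prescribed $G_{\btb}$. The $G_{\btb}$-localization then proceeds exactly as in the proof of Theorem \ref{T1}, replacing $\H$ by the induced subhypergraph $\H'$ on $\V \setminus G_{\btb}$ (balanced by \cite[Proposition 4.3]{AB}), and reducing, through Takayama's formula and Lemma \ref{uni-complex}, to a reduced-homology non-vanishing $\widetilde{H}_{n - \ell(J(\H)) - |G_{\btb}| - 1}(\Delta_{\btb^*}(J(\H')^n); K) \neq 0$.

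The main obstacle is the combinatorial identification $\ell(J(\H)) = n - \dim F$ for an appropriate face $F$ of $\overline{C}_1$, together with the resulting homological non-vanishing in the correct degree. Now $\ell(J(\H))$ is the Krull dimension of the fiber cone $K[\x_\tau : \tau \text{ is a minimal vertex cover of } \H]$, and the challenge is to match this rank invariant of the $\{0,1\}$-vertex set of $\overline{C}_1$ (cf.\ Lemma \ref{polytope}) with the dimension of the face carved out by the prescribed tight inequalities. Once this identification is secured, the homology computation should follow the same localization/induction pattern as in the proof of Theorem \ref{T1}, now exploiting that balanced subhypergraphs remain balanced.
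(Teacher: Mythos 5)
Your first half coincides with the paper's: normal torsion-freeness of $J(\H)$ for balanced $\H$ gives $\lim_{t\to\infty}\depth R/J(\H)^t = n-\ell(J(\H))$, and Theorem \ref{T1} reduces everything to the single inequality $\depth R/J(\H)^n \leqslant n-\ell(J(\H))$. The gap is in how you propose to produce the witness at power $n$. You want to \emph{construct} a partition of $\E$ and a face of $\overline{C}_1$ of dimension $n-\ell(J(\H))$, and then prove non-vanishing of $\widetilde{H}_{\,n-\ell(J(\H))-|G_{\btb}|-1}$ of the associated degree complex. You correctly flag the identification of $\ell(J(\H))$ with the codimension of a face of $\overline{C}_1$ as ``the main obstacle,'' but you leave it unresolved, and it is not a formality: nothing in the paper (or in Lemmas \ref{polytope} and \ref{polytope_1}) relates the analytic spread to the face structure of $\overline{C}_1$, and even granting such a face you would still have to show that the corresponding complex $\left<\V\setminus E_1,\ldots,\V\setminus E_r\right>$ has non-zero reduced homology in exactly the degree $n-\ell(J(\H))-|G_{\btb}|-1$. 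That homological non-vanishing is precisely the inequality you are trying to prove, so the argument becomes circular at that point.

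The paper avoids this entirely by running the argument backwards. Set $s:=\dstab(J(\H))$ and $d:=n-\ell(J(\H))$; by Brodmann's theorem and the limit formula there \emph{already exists} $\alb\in\Z^n$ with $H^d_{\mi}(R/J(\H)^s)_{\alb}\ne 0$. This $\alb$ hands you, for free, the set $G_{\alb}$, the induced balanced subhypergraph $\H'$, the partition of its edges into those with $\sum_{i\in E_j}\alb_i\leqslant s-1$ versus the rest, and a degree complex already known to be non-acyclic in the right degree. Lemma \ref{polytope_1} then supplies an integer point of $\Omega_n$, i.e.\ a multidegree $\gmb'$ at power $n$ with $\Delta_{\gmb'}(J(\H)^n)=\Delta_{\alb}(J(\H)^s)$, so the non-vanishing homology class is transported from power $s$ to power $n$ with no need to interpret $\ell(J(\H))$ combinatorially. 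I would rework your second half along these lines: do not build the witness from the polytope, extract it from the stabilized power and push it down to $n$.
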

\begin{proof}  Since $\H$ is balanced hypergraph, then $J(\H)$ is totally torsion-free\break by \cite[Theorem 1.4]{HHTZ}. Therefore, by \cite[Proposition $10.3.2$ and Theorem $10.3.13$]{HH1} we have
$$\underset{t\rightarrow \infty}{\lim}\depth R/J(\H)^t = \dim R-\ell(J(\H)).$$
Together with Theorem $\ref{T1}$, this yields
\begin{equation}\label{dstab-H}
\dstab(J(\H)) =\min\{t\geqslant 1\mid \depth R/J(\H)^t = \dim R -\ell(J(\H)\}.
\end{equation}
Hence, it suffices to show that $\dstab(J(\H)) \leqslant n$.  

Let $s:=\dstab(J(\H))$ and $d:= n-\ell(J(\H))$. Then, $H_{\mi}^d(R/J(\H)^s)_{\alb} \ne\zv$ for some $\alb\in\Z^n$. By Equation $(\ref{TA})$ we have
\begin{equation}\label{N1}
\dim_K \h_{d-|G_{\alb}|-1}(\Delta_{\alb}(J(\H)^s);k) =\dim_K H_{\mi}^d(R/J(\H)^s)_{\alb} \ne 0.
\end{equation}
In particular, $\Delta_{\alb}(J(\H)^s)$ is not acyclic.

Let $F:=G_{\alb}$, we now can consider two cases:

\medskip

{\it Case $1$}: If $F =\{1,\ldots,n\}$, then $\Delta_{\alb}(J(\H)^s)$ is either $\{\emptyset\}$ or a void complex. Since it is not acyclic, so $\Delta_{\alb}(J(\H)^s) =\{\emptyset\}$. But then by $(\ref{degree-complex})$ we would have $J(\H) = 0$, a contradiction.

\medskip

{\it Case $2$}: $F \ne \{1,\ldots,n\}$.
We may assume that $F =\{p+1,\ldots,n\}$ for some $1\leqslant p\leqslant n$. Set $S := K[x_1,\ldots,x_p]$. Let $\H'$ be the subhypergraph of $\H$ on the vertex set $\V' =\{1,\ldots,p\}$ with the edge set $\E' =\{E\in \E \mid E \subseteq \V'\}$. By \cite[Proposition 4.3] {AB}, since $\H$ is balanced,  $\H'$ is balanced too.

Moreover, by $(\ref{intersect})$ we obtain:
\begin{equation}\label{EQ-LOCALIZATION}
J(\H)R_{G_{\alb}} \cap S = J(\H').
\end{equation}
Let $\alb^* := (\alb_1,\ldots,\alb_p) \in \N^p$. By using Formulas $(\ref{degree-complex})$ and $(\ref{EQ-LOCALIZATION})$ we get
\begin{equation}\label{POWER}
\Delta_{\alb^*}(J(\H')^s) = \Delta_{\alb}(J(\H)^s) \text{ for any } s\geqslant 1.
\end{equation}
Together with $(\ref{N1})$, it gives $\h_{d-|G_{\alb}|-1}(\Delta_{\alb^*}(J(\H')^s);K)\ne 0$. It implies that\break
 $\Delta_{\alb^*}(J(\H')^s);K)$ is also not acyclic.

Suppose that $\E' =\{E_1,\ldots,E_k\}$ where $k\geqslant 1$. Then, by Equation $(\ref{complex-cover})$ $$\Delta(J(\H')) = \left<\V'\setminus E_1,\ldots, \V'\setminus E_k\right>.$$
By Lemma $\ref{uni-complex}$ we may assume that 
$$\Delta_{\alb^*}(J(\H')^t) =\left<\V'\setminus E_1,\ldots, \V'\setminus E_q\right>$$
with $1\leqslant q\leqslant k$. 

For each integer $l\geqslant 1$, let $\Omega _l$ be the set of solutions in $\R^p$ of the following system:
\begin{equation}\label{Qtt}
\begin{cases}
\sum_{i\in E_j} x_i  \leqslant l-1 & \text{ for } j = 1,\ldots,q,\\
\sum_{i\in E_j} x_i  \geqslant  l & \text{ for } j = q+1,\ldots,k,\\
x_1\geqslant 0,\ldots,x_p\geqslant 0.
\end{cases}
\end{equation}
Then $\alb^* \in \Omega _s$ by Lemma $\ref{uni-complex}$, so $\Omega _s\ne \emptyset$. 

By Lemma $\ref{polytope_1}$ we have $\Omega _{p}\cap \N^p\neq\emptyset$. Since $p \leqslant n$, by Lemma $\ref{polytope_1}$ again we have $\Omega _{p}\cap \N^n\neq\emptyset$.

Let $\gmb \in \Omega _p\cap \N^n$. Then $\gmb$ satisfies the system $(\ref{Qtt})$ by replacing $l$ by $n$. Together with Lemma $\ref{uni-complex}$ we have 
\begin{equation}\label{POWER2}
\Delta_{\gmb}(J(\H')^n) = \Delta_{\alb^*}(J(\H')^s)= \{\V\setminus E_1,\ldots, \V \setminus E_k\}.
\end{equation}
Let $\gmb' = (\gmb _1,\ldots,\gmb _p,-1,\ldots,-1)\in \Z^n$. Then $G_{\gmb '} = G_{\alb}$, and by $(\ref{POWER})$ and $(\ref{POWER2})$ we obtain
$$\Delta_{\gmb '}(J(\H)^{n}) = \Delta_{\gmb }(J(\H')^{n}) = \Delta_{\alb^*}(J(\H')^s) =  \Delta_{\alb}(J(\H)^s).$$

By Equation $(\ref{TA})$ we yields
\begin{align*}
\dim_K H_{\mi}^d(R/J(\H)^{n})_{\gmb' } &= \dim_K \h_{i-|G_{\gmb' }|-1}(\Delta_{\gmb' }(J(\H)^{n});K)\\
&=\dim_K \h_{i-|G_{\alb}|-1}(\Delta_{\alb}(J(\H)^s);K)\ne 0.
\end{align*}
In particular, $\depth R/J(\H)^n \leqslant d$. On the other hand, by Theorem $\ref{T1}$ we have
$$\depth R/J(\H)^n \geqslant \lim_{l\to\infty} \depth R/J(\H)^l = d.$$
It follows that  $\depth R/J(\H)^n = d$, so $s\leqslant n$ by $(\ref{dstab-H})$, as required. 
\end{proof}

\subsection*{Acknowledgment}  This work is partially supported by NAFOSTED (Vietnam) under the grant number 101.04-2015.02. I am also partially supported by Thai Nguyen University of Sciences under the grant number \DJ H2016-TN06-03.

\end{document}